\numberwithin{equation}{section}
\newtheorem{theorem}{Theorem}[section]
\newtheorem{corollary}[theorem]{Corollary}
\newtheorem{definition}[theorem]{Definition}
\newtheorem{lemma}[theorem]{Lemma}
\begin{document}
\baselineskip=16pt

\title{Sharp bounds on the zeroth-order general Randi\'c index of trees in terms of domination number \footnote{This work was supported by the National Natural Science Foundation of China [61773020] and Postgraduate Scientific Research  Innovation Project of Hunan Province [CX20200033]. Corresponding author: Jianping Li (lijianping65@nudt.edu.cn).
 }
}

\author{Chang Liu, Jianping Li   \\
	\small  College of Liberal Arts and Sciences, National University of Defense Technology, \\
	\small  Changsha, China, 410073.\\
}

\date{\today}

\maketitle

\begin{abstract}
The zeroth-order general Randi\'c index of graph $G=(V_G,E_G)$, denoted by $^0R_{\alpha}(G)$, is the sum of items $(d_{v})^{\alpha}$ over all vertices $v\in V_G$, where $\alpha$ is a pertinently chosen real number. In this paper, we obtain the sharp upper and lower bounds on $^0R_{\alpha}$ of trees with a domination number $\gamma$, in intervals $\alpha\in(-\infty,0)\cup(1,\infty)$ and $\alpha\in(0,1)$, respectively. The corresponding extremal graphs of these bounds are also characterized.
\\[2pt]
\textbf{AMS Subject Classification:} 05C50; 05C35; 05C69\\[2pt]
\textbf{Keywords:} The zeroth-order general Randi\'c index; Extremal trees; Domination number
\end{abstract}

\section{Introduction}
Let $G$ be a graph with vertex vertex $V_{G}$ and edge set $E_{G}$. The general Randi\'c index is defined as 
\begin{equation*}
	R_{\alpha}=R_{\alpha}(G)=\sum_{uv\in E_G}(d_ud_v)^{\alpha},
\end{equation*}
where $d_v$ denotes the degree of a vertex $v\in V(G)$, and $\alpha$ is an arbitrary real number. It's widely known that $R_{-\frac{1}{2}}$, i.e., the Randi\'c index in original sense, was introduced by the chemist Milan Randi\'c \cite{rand} under the name \textit{connectivity index} or \textit{branching index} in 1975,  which has a good correlation with a variety of physico-chemical properties of alkanes, such as enthalpy of formation, boiling point,  parameters in the Antoine equation, surface area and solubility in water, etc. In the past 30 to 40 years, the Randi\'c index has been widely utilized in physics, chemistry, biology, and complex networks \cite{Deh,rand2}, and many interesting mathematical properties have been obtained \cite{Del,Liu,Li}. In 1998, Bollob\'as and Erd\"os \cite{Boll} generalized this index by replacing $-\frac{1}{2}$ with a real number $\alpha$, and called it the general Randi\'c index, denoted by $R_{\alpha}=R_{\alpha}(G)$. 

Moreover, there are also many variants of Randi\'c index \cite{Dvo,Knor,Shi}. In \cite{Kier2}, Kier and Hall proposed the zeroth-order Randi\'c index, denoted by $^0R$. The explicit formula of $^0R$ is 
\begin{equation*}
	^0R=^0R(G)=\sum_{v\in V_G}(d_v)^{-\frac{1}{2}}.
\end{equation*}
In some bibliographies, $^0R$ is also called the modified first Zagreb index ($^mM_1$). Pavlovi\'c \cite{Pav} determined the extremal $(n,m)$-graphs of $^0R$ with maximum value. Almost at the same time, Lang et al. considered similar problems in \cite{Lang} for the first Zagreb index ($M_1$), which is defined as
\begin{equation*}
	M_1=M_1(G)=\sum_{v\in V_G}(d_v)^2.
\end{equation*}

In 2005, Li and Zheng \cite{Lix1} constructed the zeroth-order general Randi\'c index, written $^0R_{\alpha}$, is the sum of items $(d_v)^{\alpha}$ over all vertices $v\in V_G$, where $\alpha$ is an pertinently chosen real number. Note that $^0R_{-\frac{1}{2}}={^0R}={^mM_1}$, and $^0R_{2}=M_1$ in the mathematical sense. For the zeroth-order general Randi\'c index of trees, Li and Zhao \cite{Lix2} determined the first three maximum and minimum values with exponent $\alpha_0,-\alpha_0,\dfrac{1}{\alpha_0},\dfrac{1}{\alpha_0}$, where $\alpha_0\geq 2$ is an integer. In 2007, Hu et al. \cite{Hu} investigated connected $(n,m)$-graphs with extremal values of $^0R_{\alpha}$. Two years later, in \cite{Pav2}, Pavlovi\'c et al. corrected some errors in the work of Hu et al.

Recently, the relationships between Randi\'c-type indices and domination number has attracted much attention of many researchers. In 2016, Borovi\'canin and Furtula \cite{Boro2} gave the precise upper and lower bounds on the first Zagreb index ($M_1$) of trees in terms of domination number and characterized the corresponding extremal trees. Later, Bermudo et al. \cite{Berm} and Liu et al. \cite{CLiu} answered the same question regarding the Randi\'c index ($R_{-\frac{1}{2}}$) and the modified first Zagreb index ($^mM_1$), respectively. Motivate by \cite{Boro2,Berm,CLiu}, in this paper, we intend to establish connections between the zeroth-order general Randi\'c index of trees and domination number.

For convenience, we first introduce some graph-theoretic terminology and notions. The number of vertices and edges of graph $G$ are called the order and size of $G$, respectively. For each $v\in V_G$, the set of neighbours of this vertex is denoted by $N(v)=\left\lbrace u\in V_G|~uv\in E_G \right\rbrace $. A vertex $v$ for which $d_v=1$ is called a pendent vertex or a leaf vertex. The maximum vertex degree in $G$ is denote by $\Delta(G)$. The diameter of a tree is the longest path between two pendent vertices. The dominating set of graph $G$ is a vertex subset in $V_G$ such that every vertex in $V_G\setminus D$ is adjacent at least one vertex in $D$. A subset $D$ is called minimum dominating set of $G$ if $D$ contains least vertices among all dominating sets. Domination number $\gamma$ is defined as $\gamma=|D|$.

Based on the above consideration, the structure of this paper is arranged as below. In Section 2, we prove a fundamental lemma and simplify the mathematical formula of several bounds on $^0R_{\alpha}$. Then in Section 3 and 4, sharp upper and lower bounds on $^0R_{\alpha}$ of trees with a given domination number for $\alpha\in(-\infty,0)\cup(1,\infty)$ and $\alpha\in(0,1)$ are obtained, respectively. Furthermore, the corresponding extremal trees are characterized.
\section{Preliminaries}
Now, we present a basic lemma, and then show the simplified mathematical formula of bounds on $^0R_{\alpha}$.
\begin{lemma}\label{lem1}
	Let the function $f(x_1,x_2,\cdots,x_k)=x_{1}^{\alpha}+x_{2}^{\alpha}+\cdots+x_{k}^{\alpha}$, where $\left\lbrace x_1,x_2,\cdots,x_k\right\rbrace $ are all positive integers, and $\sum_{i=1}^{k}x_i=N$ be a fixed positive integer. If there exist $x_i$ and $x_j$ such that $x_i-x_j\geq2$, $i,j\in\left\lbrace1,2,\cdots,k \right\rbrace $ and $i\ne j$, then
	\begin{itemize}
		\item[\textnormal{(\romannumeral1)}]$f(x_1,\cdots,x_i,\cdots,x_j,\cdots,x_k)<f(x_1,\cdots,x_i-1,\cdots,x_j+1,\cdots,x_k)$, for $\alpha\in(0,1)$,
		\item[\textnormal{(\romannumeral2)}]$f(x_1,\cdots,x_i,\cdots,x_j,\cdots,x_k)>f(x_1,\cdots,x_i-1,\cdots,x_j+1,\cdots,x_k)$, for $\alpha\in(-\infty,0)\cup(1,\infty)$.
	\end{itemize}
\end{lemma}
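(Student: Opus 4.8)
The plan is to reduce the $k$-variable inequality to a two-variable one. Since all coordinates other than $x_i$ and $x_j$ are untouched by the operation, their $\alpha$-th powers cancel on both sides, so it suffices to compare $x_i^{\alpha}+x_j^{\alpha}$ with $(x_i-1)^{\alpha}+(x_j+1)^{\alpha}$. Write $a=x_i$ and $b=x_j$, so that $a\ge b+2$ and $a,b\ge 1$; in particular $a\ge 3$ and $b+1\ge 2$, hence every argument appearing below lies in $(1,\infty)$ and no power of $0$ ever occurs. The claim then becomes: $a^{\alpha}+b^{\alpha}<(a-1)^{\alpha}+(b+1)^{\alpha}$ when $\alpha\in(0,1)$, and the reverse strict inequality when $\alpha\in(-\infty,0)\cup(1,\infty)$.

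Rearranging, this is equivalent to comparing the two consecutive differences $a^{\alpha}-(a-1)^{\alpha}$ and $(b+1)^{\alpha}-b^{\alpha}$. I would therefore introduce the auxiliary function $\phi(t)=t^{\alpha}-(t-1)^{\alpha}$ for $t>1$ and analyse its monotonicity. We have $\phi'(t)=\alpha\bigl(t^{\alpha-1}-(t-1)^{\alpha-1}\bigr)$, and the sign of $t^{\alpha-1}-(t-1)^{\alpha-1}$ is controlled by whether the exponent $\alpha-1$ is positive or negative. A short case analysis gives: $\phi$ is strictly decreasing on $(1,\infty)$ for $\alpha\in(0,1)$, and strictly increasing on $(1,\infty)$ for $\alpha\in(-\infty,0)\cup(1,\infty)$. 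Since $a\ge b+2>b+1>1$, evaluating the appropriate monotonicity of $\phi$ at the points $a$ and $b+1$ yields exactly the desired strict inequality in each regime, and re-inserting the cancelled terms gives (i) and (ii).

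An equivalent and perhaps shorter route, which I might adopt instead, is to invoke convexity directly. Since $h''(x)=\alpha(\alpha-1)x^{\alpha-2}$, the function $h(x)=x^{\alpha}$ is strictly convex on $(0,\infty)$ precisely when $\alpha\in(-\infty,0)\cup(1,\infty)$ and strictly concave precisely when $\alpha\in(0,1)$. As $a\ge b+2$, the pair $(a-1,b+1)$ is strictly majorized by $(a,b)$ (equal sums, and $a>a-1$), so Karamata's inequality delivers $h(a)+h(b)>h(a-1)+h(b+1)$ in the convex case and the reverse inequality in the concave case.

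I expect no substantive obstacle: the only care required is bookkeeping. One must keep all arguments inside $(0,\infty)$, which is automatic from $b\ge 1$ (forcing $b+1\ge 2$ and $a-1\ge b+1\ge 2$), and one must track the sign flips in the case analysis — in particular, for $\alpha<0$ both $\alpha$ and $t^{\alpha-1}-(t-1)^{\alpha-1}$ are negative, so $\phi'>0$ and $\phi$ is increasing, which correctly groups $\alpha<0$ together with $\alpha>1$ rather than with $0<\alpha<1$.
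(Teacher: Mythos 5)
Your proposal is correct, and your primary route is essentially the paper's argument in streamlined form: both reduce to the two-variable comparison $x_i^{\alpha}+x_j^{\alpha}$ versus $(x_i-1)^{\alpha}+(x_j+1)^{\alpha}$, and both ultimately rest on the sign of $\alpha\bigl(t^{\alpha-1}-(t-1)^{\alpha-1}\bigr)$, which is exactly your $\phi'(t)$ and also exactly the partial derivative $\partial g/\partial r$ that the paper computes. The difference is organizational: the paper first uses that derivative to reduce to the extremal gap $x_i-x_j=2$ and then applies the Lagrange mean value theorem twice to settle that case, whereas you compare the two increments $\phi(a)$ and $\phi(b+1)$ directly from the monotonicity of $\phi$ on $(1,\infty)$, which avoids the detour and the nested mean value theorems. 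Your alternative route via strict convexity/concavity of $x^{\alpha}$ and Karamata (or just the standard fact that a strictly convex function has strictly increasing increments) is a genuinely different packaging: it makes explicit that the lemma is nothing but the second-difference characterization of convexity, is shorter, and generalizes verbatim to any strictly convex or concave $h$ in place of $x^{\alpha}$. Your bookkeeping is also careful where it needs to be: all arguments stay in $(0,\infty)$ because $x_j\geq 1$ forces $b+1\geq 2$ and $a-1\geq b+1$, and the sign analysis correctly groups $\alpha<0$ with $\alpha>1$.
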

\begin{proof} From the definition of function $f(x_1,x_2,\cdots,x_k)$, we consider the following function $g(x_i,x_j,\alpha)$,
\begin{align*}
	g(x_i,x_j,\alpha)&=f(x_1,\cdots,x_i,\cdots,x_j,\cdots,x_k)-f(x_1,\cdots,x_i-1,\cdots,x_j+1,\cdots,x_k)\\
	&=x_{i}^{\alpha}+x_{j}^{\alpha}-(x_i-1)^{\alpha}-(x_j+1)^{\alpha}.
\end{align*}
Note that $x_i-x_j\geq2$, we let $x_i=x_j+r$, where $r\geq2$ is a integer. Hence, $g(x_i,x_j,\alpha)=g(x_j,r,\alpha)=(x_j+r)^{\alpha}+x_{j}^{\alpha}-(x_j+r-1)^{\alpha}-(x_j+1)^{\alpha}$. Suppose $r$ is a continuous variable, then we get $\frac{\partial g(x_j,r,\alpha)}{\partial r}=\alpha(x_j+r)^{\alpha-1}-\alpha(x_j+r-1)^{\alpha-1}$ is positive if $\alpha\in(-\infty,0)\cup(1,\infty)$, and negative if $\alpha\in(0,1)$. By the Lagrange mean-value theorem, we have
\begin{itemize}
	\item[\textnormal{(\romannumeral1)}] 
	\begin{align*}
		&~~~~x_{i}^{\alpha}+x_{j}^{\alpha}-(x_i-1)^{\alpha}-(x_j+1)^{\alpha}\\
		&\leq\left( x_{j}+2 \right)^{\alpha}+x_{j}^{\alpha}-2(x_j+1)^{\alpha}\\
		&=\alpha\xi_1^{\alpha-1}-\alpha\xi_2^{\alpha-1}\\
		&=\alpha(\alpha-1)(\xi_1-\xi_2)\eta^{\alpha-2}<0,
	\end{align*}  
	for $\alpha\in(0,1)$, where $x_j<\xi_2<x_j+1<\xi_1<x_i+2$, and $\xi_2<\eta<\xi_1$.
	\item[\textnormal{(\romannumeral2)}] \begin{align*}
		&~~~~x_{i}^{\alpha}+x_{j}^{\alpha}-(x_i-1)^{\alpha}-(x_j+1)^{\alpha}\\
		&\geq\left( x_{j}+2 \right)^{\alpha}+x_{j}^{\alpha}-2(x_j+1)^{\alpha}\\
		&=\alpha\xi_1^{\alpha-1}-\alpha\xi_2^{\alpha-1}\\
		&=\alpha(\alpha-1)(\xi_1-\xi_2)\eta^{\alpha-2}>0,
	\end{align*}  
	for $\alpha\in(-\infty,0)\cup(1,\infty)$, where $x_j<\xi_2<x_j+1<\xi_1<x_i+2$, and $\xi_2<\eta<\xi_1$.
\end{itemize}

This completes the proof.
\end{proof}

By repeating Lemma \ref{lem1}, finally, we can obtain the following corollary.
\begin{corollary}\label{coro1}
	Assume the function $f(x_1,x_2,\cdots,x_k)$ is defined as above. Then,
	\begin{itemize}
		\item[\textnormal{(\romannumeral1)}] for $\alpha\in(0,1)$, $f(x_1,x_2,\cdots,x_k)$ attains its maximum value if the difference between any two integer in $\left\lbrace x_1,x_2,\cdots,x_k\right\rbrace $ at most one, and
		\item[\textnormal{(\romannumeral2)}] for $\alpha\in(-\infty,0)\cup(1,\infty)$, $f(x_1,x_2,\cdots,x_k)$ attains its minimum value if the difference between any two integer in $\left\lbrace x_1,x_2,\cdots,x_k\right\rbrace $ at most one.
	\end{itemize}
\end{corollary}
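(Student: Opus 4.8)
The plan is to obtain Corollary~\ref{coro1} from Lemma~\ref{lem1} by a standard extremal/monovariant argument, together with a short uniqueness observation. Fix $k$ and $N$ (with $N\geq k$, so that the configuration set is nonempty), and let $\mathcal{S}$ be the finite set of all $k$-tuples of positive integers with $\sum_{i=1}^{k}x_i=N$. Since $\mathcal{S}$ is finite, $f$ attains a maximum on $\mathcal{S}$ when $\alpha\in(0,1)$ and a minimum on $\mathcal{S}$ when $\alpha\in(-\infty,0)\cup(1,\infty)$. Fix such an extremal tuple $(x_1,\ldots,x_k)$.

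Next I would argue by contradiction that this extremal tuple is balanced. Suppose two entries satisfy $x_i-x_j\geq2$. Then the rebalanced tuple obtained by replacing $(x_i,x_j)$ with $(x_i-1,x_j+1)$ still lies in $\mathcal{S}$: the sum is unchanged, and the entries remain positive since $x_i\geq x_j+2\geq3$ forces $x_i-1\geq2>0$. Applying Lemma~\ref{lem1}(i) for $\alpha\in(0,1)$ produces a tuple with strictly larger $f$-value, contradicting maximality; applying Lemma~\ref{lem1}(ii) for $\alpha\in(-\infty,0)\cup(1,\infty)$ produces a tuple with strictly smaller $f$-value, contradicting minimality. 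Hence in any extremal tuple the difference between any two entries is at most one. (If one prefers a constructive statement, one can instead start from an arbitrary tuple in $\mathcal{S}$ and repeat the move $(x_i,x_j)\mapsto(x_i-1,x_j+1)$ whenever $x_i-x_j\geq2$; each move changes $\sum_{\ell}x_\ell^2$ by $2(x_j-x_i+1)\leq-2$, so this nonnegative integer strictly decreases and the process terminates after finitely many steps at a balanced tuple, with $f$ moving monotonically in the appropriate direction at each step by Lemma~\ref{lem1}.)

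Finally I would note that a tuple with all pairwise differences at most one is unique up to permutation of coordinates: its entries take only the two consecutive values $\lfloor N/k\rfloor$ and $\lceil N/k\rceil$, and the number of each is forced by $N$ and $k$. This pins down both the extremal value and the extremal configuration, which is exactly what the corollary asserts. The only real care needed anywhere is the bookkeeping that guarantees positivity of the entries after a rebalancing move, so that Lemma~\ref{lem1} genuinely applies at each step, and (in the constructive version) that the procedure terminates; both are immediate from the computations above, and there is no analytic obstacle beyond Lemma~\ref{lem1} itself.
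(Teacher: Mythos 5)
Your argument is correct and is essentially the paper's own approach made rigorous: the paper simply says the corollary follows ``by repeating Lemma \ref{lem1},'' and your extremal-tuple contradiction (or, equivalently, the monovariant $\sum_\ell x_\ell^2$ guaranteeing termination of the rebalancing moves) supplies exactly the justification that repeated application is legitimate and terminates at a balanced tuple. The positivity check and the uniqueness-up-to-permutation observation are correct and complete the argument.
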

Note that $D$ is a minimum dominating set in a tree $T$ with order $n$ and domination number $\gamma$, and $\overline{D}=V(T)\backslash D$. Let $E_1=\left\lbrace uv\in E_T|~u\in D,~v\in \overline{D} \right\rbrace $, $E_{2}=\left\lbrace uv\in E_T|~u\in D,~v\in D\right\rbrace $, $E_{3}=\left\lbrace uv\in E_T|~u\in \overline{D},~v\in \overline{D}\right\rbrace $ be three subsets of $E_T$, and $l_1=|E_1|$, $l_2=|E_2|$, $l_3=|E_3|$. It's obvious that
\begin{equation}\label{sys1}
	\begin{cases}
		l_1+l_2+l_3=|E_T|=n-1,\\
		\sum\limits_{v\in D}d_{v}=l_1+2l_2,\\
		\sum\limits_{v'\in\overline{D}}d_{v'}=l_1+2l_3.
	\end{cases}
\end{equation}

 Now the zeroth-order general Randi\'c index $^0R_{\alpha}(T)$ can be given by
\begin{equation}\label{sum1}
	^0R_{\alpha}(T)=\sum\limits_{v\in D}(d_{v})^{\alpha}+\sum\limits_{v'\in\overline{D}}(d_{v'})^{\alpha}.
\end{equation}

Since each $v\in\overline{D}$ is adjacent to at least one vertex of $D$, one can see that $l_1\geq n-\gamma$. Then by calculation, we obtain $l_2+l_3\leq\gamma-1$, implying
\begin{equation}\label{eqbasis}
	 |l_2-l_3|\leq\gamma-1.
\end{equation}

If $\alpha\in(0,1)$, then by Corollary \ref{coro1}, we can see the sum \eqref{sum1} necessarily attains maximum when degrees $d_{v}\in\left\lbrace\lfloor\frac{l_1+2l_2}{\gamma}\rfloor, \lceil\frac{l_1+2l_3}{\gamma}\rceil \right\rbrace $ for any vertex $v\in D$ and degrees $d_{v'}\in\left\lbrace\lfloor\frac{l_1+2l_2}{n-\gamma}\rfloor, \lceil\frac{l_1+2l_3}{n-\gamma}\rceil \right\rbrace$ for any vertex $v'\in\overline{D}$. Therefore, we let $l_1+2l_2=q\gamma+t$ ($0\leq t\leq \gamma-1$) and $l_1+2l_3=q'(n-\gamma)+t'$ $(0\leq t'\leq n-\gamma-1)$, where $q=\lfloor\frac{l_1+2l_2}{\gamma}\rfloor$, $t=l_1+2l_2-\gamma\lfloor\frac{l_1+2l_2}{\gamma}\rfloor$, $q'=\lfloor\frac{l_1+2l_3}{n-\gamma}\rfloor$ and $t'=l_1+2l_3-\gamma\lfloor\frac{l_1+2l_3}{n-\gamma}\rfloor$. Bearing in mind previous discussion, one can check that the formula shown in \eqref{sum1} will attain its maximum if $D$ contains $t$ vertices with degree $q+1$ and $\gamma-t$ vertices with degree $q$, and $\overline{D}$ contains $t'$ vertices with degree $q'+1$ and $n-\gamma-t'$ vertices with degree $q'$. Thus, we have
\begin{align*}
	\sum\limits_{v\in D}(d_{v})^{\alpha}&\leq t(q+1)^{\alpha}+(\gamma-t)q^{\alpha}\\
	&=\left(n-1+l_2-l_3-\gamma\lfloor\frac{n-1+l_2-l_3}{\gamma}\rfloor \right)\left[\left( \lfloor\frac{n-1+l_2-l_3}{\gamma}\rfloor+1\right)^{\alpha} -\left( \lfloor\frac{n-1+l_2-l_3}{\gamma}\rfloor\right)^{\alpha} \right]\\
	&~~~~+\gamma\left( \lfloor\frac{n-1+l_2-l_3}{\gamma}\rfloor\right)^{\alpha},
\end{align*}
and
\begin{align*}
	\sum\limits_{v'\in \overline{D}}(d_{v'})^{\alpha}&\leq t'(q'+1)^{\alpha}+(\gamma-t')(q')^{\alpha}\\
	&=\left[n-1+l_2-l_3-(n-\gamma)\lfloor\frac{n-1+l_3-l_2}{n-\gamma}\rfloor \right]\left[\left( \lfloor\frac{n-1+l_3-l_2}{n-\gamma}\rfloor+1\right)^{\alpha} -\left( \lfloor\frac{n-1+l_3-l_2}{n-\gamma}\rfloor\right)^{\alpha} \right]\\
	&~~~~+(n-\gamma)\left( \lfloor\frac{n-1+l_3-l_2}{n-\gamma}\rfloor\right)^{\alpha},
\end{align*}
which implies that
\begin{equation}\label{ineq1}
\begin{split}
	^0R_{\alpha}&\leq\left(n-1+l_2-l_3-\gamma\lfloor\frac{n-1+l_2-l_3}{\gamma}\rfloor \right)\left[\left( \lfloor\frac{n-1+l_2-l_3}{\gamma}\rfloor+1\right)^{\alpha} -\left( \lfloor\frac{n-1+l_2-l_3}{\gamma}\rfloor\right)^{\alpha} \right]\\
	&~~~+\left[n-1+l_2-l_3-(n-\gamma)\lfloor\frac{n-1+l_3-l_2}{n-\gamma}\rfloor \right]\left[\left( \lfloor\frac{n-1+l_3-l_2}{n-\gamma}\rfloor+1\right)^{\alpha} -\left( \lfloor\frac{n-1+l_3-l_2}{n-\gamma}\rfloor\right)^{\alpha} \right]\\
	&~~~+\gamma\left( \lfloor\frac{n-1+l_2-l_3}{\gamma}\rfloor\right)^{\alpha}+(n-\gamma)\left( \lfloor\frac{n-1+l_3-l_2}{n-\gamma}\rfloor\right)^{\alpha}. 
\end{split}	
\end{equation}

For fixed $n$ and $\gamma$, the right-hand side of the inequality \eqref{ineq1} can be viewed as the function $h(l_2-l_3)$, i.e. $^0R_{\alpha}\leq h(l_2-l_3)$ for $\alpha\in(0,1)$. 

Analogously, if $\alpha\in(-\infty,0)\cup(1,\infty)$, we can derive the inequality $^0R_{\alpha}\geq h(l_2-l_3)$. So far, we have obtained a simplified formula of bounds on $^0R_{\alpha}$.

\section{Bounds for the $^0R_{\alpha\in(0,1)}$ of trees in terms of domination number}
In this section, several upper and lower bounds for the zeroth-order general Randi\'c index $^0R_{\alpha\in(0,1)}$ of trees are determined. To characterize extremal $n$-vertex trees of $^0R_{2}$ with a given domination number $\gamma$, Borovi\'canin and Furtula \cite{Boro2} defined three trees family, denoted by $\mathcal{F}_{1}(n,\gamma)$, $\mathcal{F}_{2}(n,\gamma)$ and $\mathcal{F}_{3}(n,\gamma)$ in here, which will be shown below.

\begin{definition}
	\begin{itemize}
		\item[\textnormal{(\romannumeral1)}] $\mathcal{F}_{1}(n,\gamma)$ is a graph family contains some $n$-vertex trees with domination number $\gamma$ which consists of the stars of orders $\lfloor\frac{n-\gamma}{\gamma}\rfloor$ and $\lceil\frac{n-\gamma}{\gamma}\rceil$ with exactly $\gamma-1$ pairs of adjacent pendent vertices in neighbouring stars.
		\item[\textnormal{(\romannumeral2)}] $\mathcal{F}_{2}(n,\gamma)$ is a graph family contains some $n$-vertex trees $T$ with domination number $\gamma$ such that each vertex in $V_T$ has at most one pendent neighbour and $T$ satisfies: (1) there exists a minimum dominating set $D$ of $T$ has $3\gamma-n-2$ vertices with degree $3$ and $2(n-2\gamma)$ vertices with degree $2$, while $\overline{D}$ has $n-2\gamma+2$ vertices with degree $2$ and $3\gamma-n$ pendent vertices, or (2) there exists a minimum dominating set $D$ of $T$ has $n-2\gamma$ vertices with degree $2$ and $3\gamma-n$ pendent vertices, while $\overline{D}$ has $2(n-2\gamma+1)$ vertices with degree $2$, $3\gamma-n-2$ with degree $3$, and each vertex in $\overline{D}$ has only one neighbour in domination set $D$.
		\item[\textnormal{(\romannumeral3)}] $\mathcal{F}_{3}(n,\gamma)$ is a set of trees with order $n$ and domination number $\gamma$, which are obtained from the star $S_{n-\gamma+1}$ by attaching a pendant edge to its $\gamma-1$ pendent vertices.
	\end{itemize}
\end{definition}

\begin{figure}[htbp]
	\centering 
	\includegraphics[height=6cm, width=12cm]{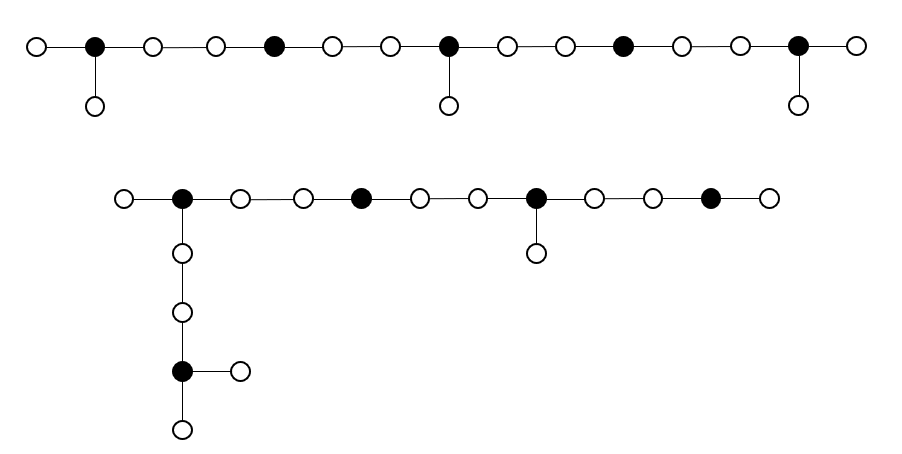}
	\caption{Tow non-isomorphic trees in the graph family $\mathcal{F}_1(n,\gamma)$.}
	\label{figure1}
\end{figure}

Next, we will give two theorems to prove that graph family $\mathcal{F}_{i}(n,\gamma)$ ($i=1,2,3$) are also extremal trees of $^0R_{\alpha}$ with a given domination number $\gamma$, for $\alpha\in(0,1)$.

\begin{theorem}
	Let $T$ be an $n$-vertex tree with domination number $\gamma$, $\alpha\in(0,1)$, then
	\begin{itemize}
		\item[\textnormal{(\romannumeral1)}]
		\begin{equation*}
			^0R_{\alpha}\leq\left[\left(\lfloor\frac{n-1}{\gamma}\rfloor\right)^{\alpha}-\left(\lfloor\frac{n-1}{\gamma}\rfloor-1\right)^{\alpha}  \right]\left(n-\gamma\lfloor\frac{n-1}{\gamma}\rfloor \right)+\gamma\left(\lfloor\frac{n-1}{\gamma}\rfloor-1\right)^{\alpha}+2(2^{\alpha}-1)(\gamma-1)+(n-\gamma),
		\end{equation*}  
		for $1\leq\gamma\leq\frac{n}{3}$, with equality holding if and only if $T\in\mathcal{F}_{1}(n,\gamma)$.
		\item[\textnormal{(\romannumeral2)}]
		\begin{equation*}
			^0R_{\alpha}\leq\begin{cases}
				(n-2)\cdot2^{\alpha}+2, & \mbox{for }\gamma = \lceil\frac{n}{3}\rceil, \\
				(-3^{\alpha}+3\cdot 2^{\alpha}-1) n+3(3^{\alpha}-2\cdot2^{\alpha}+1)\gamma+2(2^{\alpha}-3^{\alpha}), & \mbox{for } \frac{n+3}{3}\leq\gamma\leq\frac{n}{2},
			\end{cases}
		\end{equation*}
		with equality holding if and only if $T\in\mathcal{F}_{2}(n,\gamma)$.
	\end{itemize}
\end{theorem}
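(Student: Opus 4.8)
\noindent\emph{Proof plan.}

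The plan is to run everything off the inequality $^0R_{\alpha}(T)\le h(l_2-l_3)$ set up in Section~2 (with $h$ the right-hand side of \eqref{ineq1} regarded as a function of $s:=l_2-l_3$, obtained via Corollary~\ref{coro1}), together with the fact that equality in it forces $l_1+2l_2=n-1+s$, $l_1+2l_3=n-1-s$, and the degrees inside $D$ and inside $\overline{D}$ to be as balanced as possible; recall also that $s$ ranges over the integers of $[1-\gamma,\gamma-1]$ by \eqref{eqbasis}. So the task splits into bounding $h$ on the admissible $s$ and then recognizing the extremal trees. The key preliminary computation is the one-step difference of $h$: since raising $M$ to $M+1$ lifts exactly one block of its most balanced partition into $p$ blocks, one gets
\[
h(s+1)-h(s)=\Big[\big(\lfloor\tfrac{n-1+s}{\gamma}\rfloor+1\big)^{\alpha}-\lfloor\tfrac{n-1+s}{\gamma}\rfloor^{\alpha}\Big]-\Big[\big(\lfloor\tfrac{n-2-s}{n-\gamma}\rfloor+1\big)^{\alpha}-\lfloor\tfrac{n-2-s}{n-\gamma}\rfloor^{\alpha}\Big],
\]
and I will lean on two elementary facts valid for $\alpha\in(0,1)$: $k\mapsto(k+1)^{\alpha}-k^{\alpha}$ is strictly decreasing on $\{0,1,2,\dots\}$, and $3^{\alpha}-2^{\alpha}<2^{\alpha}-1$, i.e.\ $2^{\alpha+1}>3^{\alpha}+1$ (the function $\alpha\mapsto 2^{\alpha+1}-3^{\alpha}-1$ is concave on $[0,1]$ and vanishes at both endpoints).

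For part (\romannumeral1), where $1\le\gamma\le n/3$, I would first observe that for every admissible $s$ one has $\lfloor\frac{n-1+s}{\gamma}\rfloor\ge\frac{n-\gamma}{\gamma}\ge 2$, and that for $s\le\gamma-2$ (the only range relevant to $h(s+1)-h(s)$) one has $\lfloor\frac{n-2-s}{n-\gamma}\rfloor=1$; combined with the two facts above this yields $h(s+1)-h(s)\le(3^{\alpha}-2^{\alpha})-(2^{\alpha}-1)<0$, so $h$ is strictly decreasing and $\max_s h(s)=h(1-\gamma)$. Evaluating $h(1-\gamma)$ — which corresponds to $l_2=0,\ l_3=\gamma-1,\ l_1=n-\gamma$ and uses $\lfloor\frac{n-\gamma}{\gamma}\rfloor\ge 2$, $\lfloor\frac{n+\gamma-2}{n-\gamma}\rfloor=1$ — gives exactly the stated bound with $q=\lfloor\frac{n-1}{\gamma}\rfloor$. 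Equality then forces $s=1-\gamma$ (hence $l_2=0$, so $D$ is independent; $l_1=n-\gamma=|\overline{D}|$, so each vertex of $\overline{D}$ has exactly one neighbour in $D$; $l_3=\gamma-1$) together with balancedness; the latter pins $\overline{D}$ to $2(\gamma-1)$ vertices of degree $2$ and $n-3\gamma+2$ of degree $1$, and the $\gamma-1$ edges of $T$ inside $\overline{D}$ to a perfect matching on the degree-$2$ vertices. Contracting each star $\{v\}\cup N(v)$, $v\in D$, then shows $T$ is $\gamma$ balanced stars joined tree-fashion by $\gamma-1$ leaf--leaf edges, i.e.\ $T\in\mathcal{F}_{1}(n,\gamma)$; conversely every such tree attains the bound.

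Part (\romannumeral2) is where the real work lies, because monotonicity fails: for $\gamma>n/3$ one has $\lfloor\frac{n-\gamma}{\gamma}\rfloor=1$ and $\lfloor\frac{n+\gamma-2}{n-\gamma}\rfloor=2$, so the displayed identity at $s=1-\gamma$ gives $h(2-\gamma)-h(1-\gamma)=(2^{\alpha}-1)-(3^{\alpha}-2^{\alpha})=2^{\alpha+1}-3^{\alpha}-1>0$; thus $h(1-\gamma)$ is \emph{not} the maximum of $h$ over $[1-\gamma,\gamma-1]$ and the bound cannot come from $h$ alone. The plan here is to invoke the minimality of $D$ to rule out the more balanced (and $h$-larger) configurations with $l_2-l_3>1-\gamma$: every $v\in D$ has no neighbour in $D$, or owns a private neighbour $u\in\overline{D}$ with $N(u)\cap D=\{v\}$ (private neighbours of distinct vertices being distinct), from which one deduces that each pendant of $T$ in $\overline{D}$ hangs on a vertex of $D$ and that no vertex of $\overline{D}$ carries two pendants. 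Pushing these constraints — large $\gamma$ forcing $\overline{D}$ to be ``spread thin'', i.e.\ mostly pendants and near-pendants hung on $D$ — should squeeze $l_2$ down to $0$, $l_3$ up to $\gamma-1$, and the balanced degree sequence to $3\gamma-n-2$ vertices of degree $3$, $3n-6\gamma+2$ of degree $2$, and $3\gamma-n$ of degree $1$, which is exactly $\mathcal{F}_{2}(n,\gamma)$. Finally, evaluating $h(1-\gamma)$ for $\tfrac{n+3}{3}\le\gamma\le\tfrac{n}{2}$ (with $\lfloor\frac{n-\gamma}{\gamma}\rfloor=1$, $\lfloor\frac{n+\gamma-2}{n-\gamma}\rfloor=2$) produces $(-3^{\alpha}+3\cdot2^{\alpha}-1)n+3(3^{\alpha}-2\cdot2^{\alpha}+1)\gamma+2(2^{\alpha}-3^{\alpha})$, and for the borderline value $\gamma=\lceil n/3\rceil$ it collapses to $(n-2)2^{\alpha}+2={}^{0}R_{\alpha}(P_n)$ with $P_n$ the unique extremal tree; the equality cases follow from the same structural analysis plus a direct check that every member of $\mathcal{F}_{2}(n,\gamma)$ realizes the bound.

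The step I expect to be the main obstacle is precisely the structural argument in part (\romannumeral2): proving that for $\gamma$ in this range the minimality of the dominating set forbids any admissible configuration with $l_2-l_3>1-\gamma$ from simultaneously realizing balanced degrees, so that the admissible extremum is forced to be $\mathcal{F}_{2}(n,\gamma)$ (respectively $P_n$). Everything in part (\romannumeral1), together with the floor-function evaluations of $h(1-\gamma)$, is comparatively routine once the one-step difference of $h$ and the inequality $2^{\alpha+1}>3^{\alpha}+1$ are in hand.
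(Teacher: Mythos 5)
Your part (i) is correct and in fact takes a cleaner route than the paper: the one-step difference $h(s+1)-h(s)=\bigl[(q+1)^{\alpha}-q^{\alpha}\bigr]-\bigl[(q'+1)^{\alpha}-(q')^{\alpha}\bigr]$ with $q=\lfloor\frac{n-1+s}{\gamma}\rfloor\ge 2$ and $q'=\lfloor\frac{n-2-s}{n-\gamma}\rfloor=1$, together with the strict decrease of $k\mapsto(k+1)^{\alpha}-k^{\alpha}$, gives global strict monotonicity of $h$ in one stroke, whereas the paper splits $[1-\gamma,\gamma-1]$ into subintervals on which $h$ is affine and compares the candidate maxima pairwise ($h(0)$ against $h(\gamma\lfloor\frac{n-1}{\gamma}\rfloor+\gamma-n+1)$, then $h(0)$ against $h(1-\gamma)$). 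Your evaluation of $h(1-\gamma)$ and the recovery of $\mathcal{F}_1(n,\gamma)$ from $l_1=n-\gamma$, $l_2=0$, $l_3=\gamma-1$ plus balancedness agree with the paper.

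The genuine gap is in part (ii), and it is exactly the step you flag as the main obstacle: you never prove that the configurations with $h$-value above the stated bound are unrealizable. The paper closes this with two concrete inputs that your sketch does not supply. First, it restricts extremal trees to degrees in $\{1,2,3\}$, which turns $h$ into an affine function of the number $s_1'$ of pendent vertices in $\overline{D}$ (resp.\ $s_1$ in $D$), namely $h=(3^{\alpha}-2\cdot2^{\alpha}+1)s_1'+2^{\alpha}n+2(2^{\alpha}-3^{\alpha})$ with strictly negative leading coefficient. Second, it invokes the inequality $s_1'\ge 3\gamma-n$ from Borovi\'canin--Furtula, which pins the maximum at $s_1'=3\gamma-n$; your private-neighbour heuristic gestures at this but does not establish it. There is also a substantive error in your intended conclusion: you aim to squeeze every extremal configuration to $l_2=0$, $l_3=\gamma-1$, but the paper's Case 1 / Case 2 analysis shows there are \emph{two} distinct realizable optima, $l_2-l_3=5\gamma-2n+1$ (degree-$3$ vertices lying in $D$) and $l_2-l_3=1-\gamma$ (degree-$3$ vertices lying in $\overline{D}$), attaining the same value of $h$ and together making up $\mathcal{F}_2(n,\gamma)$. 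An argument forcing $l_2=0$ would therefore miss half of the equality cases.
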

\begin{proof}
	(i). For path $P_3$, the theorem holds. Suppose $n\geq3$, then by $1\leq\gamma\leq\frac{n}{3}$, we get $n-\gamma\geq\frac{2n}{3}$, i.e. $\frac{\gamma-1}{n-\gamma}\leq\frac{n-3}{2n}<\frac{1}{2}$. Combining \eqref{eqbasis}, yields
	\begin{equation*}
		1=\frac{n-1-\gamma+1}{n-\gamma}\leq\frac{n-1+l_3-l_2}{n-\gamma}\leq\frac{n-1+\gamma-1}{n-\gamma}=1+2\frac{\gamma-1}{n-\gamma}<2,
	\end{equation*}
	implying
	\begin{equation*}
		q'=\lfloor \frac{n-1+l_3-l_2}{n-\gamma}\rfloor=1.
	\end{equation*}

	Then by $\frac{n-1+l_2-l_3}{\gamma}\geq\frac{n-1-\gamma+1}{\gamma}=\frac{n-\gamma}{\gamma}\geq\frac{2n}{n}=2$, we can see $q=\lfloor\frac{n-1+l_2-l_3}{\gamma}\rfloor\geq2$. Now, the function $h(l_2-l_3)$ can be formula that
	\begin{equation}
		\begin{split}
			h(l_2-l_3)&=\left[(q+1)^{\alpha}-q^{\alpha}+1-2^{\alpha} \right](l_2-l_3)+(n-\gamma q -1 )\left[\left(q+1\right)^{\alpha}-q^{\alpha} \right]+\gamma q^{\alpha}\\
			&~~~~+(\gamma-1)(2^{\alpha}-1)+(n-\gamma).
		\end{split}
	\end{equation}

	There are two possible cases to be consider.
	
	\textbf{Case 1.} $0\leq l_2-l_3\leq\gamma-1$.
	In such a case, $\frac{n-1}{\gamma}\leq\frac{n-1+l_2-l_3}{\gamma}\leq\frac{n-1+\gamma-1}{\gamma}<\frac{n-1}{\gamma}+1$, implying
	\begin{align*}
		q=\lfloor \frac{n-1+l_2-l_3}{\gamma} \rfloor &= \lfloor \frac{n-1}{\gamma} \rfloor,~~~~~~~\mbox{for  }0\leq l_2-l_3\leq\gamma\lfloor \frac{n-1}{\gamma} \rfloor+\gamma-n,\\
		q=\lfloor \frac{n-1+l_2-l_3}{\gamma} \rfloor &= \lfloor \frac{n-1}{\gamma} \rfloor+1,~~\mbox{for  }\gamma\lfloor \frac{n-1}{\gamma} \rfloor+\gamma-n+1\leq l_2-l_3\leq\gamma-1.
	\end{align*}

		Due to $\alpha\in(0,1)$, easily, one can check that $h(l_2-l_3)$ always decrease in interval $\left[ 0 ,\gamma\lfloor \frac{n-1}{\gamma} \rfloor+\gamma-n\right]$ and $\left[ \gamma\lfloor \frac{n-1}{\gamma} \rfloor+\gamma-n+1,\gamma-1\right]$. Thus, $h(l_2-l_3)$ will attain its maximum if $l_2-l_3=0$ or $l_2-l_3=\gamma\lfloor \frac{n-1}{\gamma} \rfloor+\gamma-n+1$. We consider the following difference
		\begin{equation}\label{dif1}
			h\left(\gamma\lfloor \frac{n-1}{\gamma} \rfloor+\gamma-n+1\right)-h(0)=\left[ \gamma\lfloor\frac{n-1}{\gamma}\rfloor+\gamma-(n-1) \right]\left[\left(\lfloor\frac{n-1}{\gamma}\rfloor+1 \right)^{\alpha}- \left(\lfloor\frac{n-1}{\gamma}\rfloor \right)^{\alpha}+1-2^{\alpha} \right].
		\end{equation}
	
	See $\frac{n-1}{\gamma}\geq\frac{n-1}{n/3}\geq2+\frac{n-3}{n}$, we have $\lfloor \frac{n-1}{\gamma} \rfloor\geq2$. Hence, 
	\begin{equation}\label{dif2}
		\left[\left(\lfloor\frac{n-1}{\gamma}\rfloor+1 \right)^{\alpha}- \left(\lfloor\frac{n-1}{\gamma}\rfloor \right)^{\alpha}+1-2^{\alpha} \right]\leq 3^{\alpha}+1-2^{\alpha}<0, 
	\end{equation}
	for any $\alpha\in(0,1)$. Combining \eqref{dif1} and \eqref{dif2}, yields $h\left( \gamma\lfloor \frac{n-1}{\gamma} \rfloor+\gamma-n+1\right)<h(0) $. Then the function $h(0)$ becomes
	\begin{equation}
		h(0)=\left(n-\gamma\lfloor \frac{n-1}{\gamma} \rfloor -1\right)\left[\left( \lfloor \frac{n-1}{\gamma} \rfloor+1\right)^{\alpha}-\left( \lfloor \frac{n-1}{\gamma} \rfloor\right)^{\alpha}   \right]+\gamma\left( \lfloor \frac{n-1}{\gamma} \rfloor\right)^{\alpha}+(n-\gamma)+(\gamma-1)(2^{\alpha}-1). 
	\end{equation}
	
	\textbf{Case 2.} $1-\gamma\leq l_2-l_3\leq 0$. Note that $\frac{n-\gamma-1}{\gamma}\leq\frac{n-\gamma}{\gamma}\leq\frac{n-1+l_2-l_3}{\gamma}\leq\frac{n-1}{\gamma}$. From \eqref{eqbasis}, we get
	\begin{align}
		q=\lfloor \frac{n-1+l_2-l_3}{\gamma} \rfloor &= \lfloor \frac{n-1}{\gamma} \rfloor,~~~~~~~\mbox{for  }\gamma\lfloor \frac{n-1}{\gamma} \rfloor-n+1\leq l_2-l_3\leq0,\label{theinequ3}\\
		q=\lfloor \frac{n-1+l_2-l_3}{\gamma} \rfloor &= \lfloor \frac{n-1}{\gamma} \rfloor-1,~~\mbox{for  }1-\gamma\leq l_2-l_3\leq\gamma\lfloor \frac{n-1}{\gamma} \rfloor-n.\label{theinequ4}
	\end{align}
	
	Analogously, we conclude that $h(l_2-l_3)$ will attain its maximum if $l_2-l_3=\gamma\lfloor \frac{n-1}{\gamma} \rfloor-n+1$ or $l_2-l_3=1-\gamma$. Consider the difference $h\left(\gamma\lfloor \frac{n-1}{\gamma} \rfloor-n+1 \right)-h(1-\gamma) $ which is given by
	\begin{equation}
		h\left(\gamma\lfloor \frac{n-1}{\gamma} \rfloor-n+1 \right)-h(1-\gamma)=-\left(n-\gamma\lfloor \frac{n-1}{\gamma} \rfloor -\gamma\right)\left[\left(\lfloor\frac{n-1}{\gamma}\rfloor \right)^{\alpha}- \left(\lfloor\frac{n-1}{\gamma}\rfloor-1 \right)^{\alpha}+1-2^{\alpha} \right].
	\end{equation}
	
	Since $ n-\gamma-\gamma\lfloor\frac{n-1}{\gamma}\rfloor\leq0 $, for $\gamma\geq2$ and $\left(\lfloor\frac{n-1}{\gamma}\rfloor \right)^{\alpha}- \left(\lfloor\frac{n-1}{\gamma}\rfloor-1 \right)^{\alpha}+1-2^{\alpha} \leq0$, for $\alpha\in(0,1)$ and $\lfloor\frac{n-1}{\gamma}\rfloor\geq2$, we have $h\left(\gamma\lfloor \frac{n-1}{\gamma} \rfloor-n+1 \right)\leq h(1-\gamma)$. In addition, if $n=\gamma\lfloor\frac{n-1}{\gamma}\rfloor+\gamma$, then only the inequality \eqref{theinequ4} holds. Note that $\left(\lfloor\frac{n-1}{\gamma}\rfloor \right)^{\alpha}- \left(\lfloor\frac{n-1}{\gamma}\rfloor-1 \right)^{\alpha}=2^{\alpha}-1$ if and only if $\lfloor\frac{n-1}{\gamma}\rfloor=2$, implying, $2\gamma+1\leq n<3\gamma+1$. Hence, $n=3\gamma$. Then we can conclude that the extremal tree $T$ in such a case consist of $\gamma$ $3$-vertex paths $P_3$ with exactly $\gamma-1$ pairs of adjacent pendent vertices in neighbouring paths. Obviously, $T\in\mathcal{F}_{1}(n,\gamma)$. 
	
	Consequently, in order to find the feasible maximum value of $h(l_2-l_3)$, we just need to calculate the value of $h(0)-h(1-\gamma)$,
	\begin{equation*}
		\begin{split}
			h(0)-h(1-\gamma)&=\left[\left( \lfloor \frac{n-1}{\gamma} \rfloor+1\right)^{\alpha}-\left( \lfloor \frac{n-1}{\gamma} \rfloor\right)^{\alpha}   \right]\left(n-\gamma\lfloor \frac{n-1}{\gamma} \rfloor -1\right)+\gamma\left( \lfloor \frac{n-1}{\gamma} \rfloor\right)^{\alpha}\\
			&~~~~-\left[\left( \lfloor \frac{n-1}{\gamma} \rfloor\right)^{\alpha}-\left( \lfloor \frac{n-1}{\gamma} \rfloor-1\right)^{\alpha}  \right] \left(n-\gamma \lfloor \frac{n-1}{\gamma} \rfloor \right) -\gamma\left( \lfloor \frac{n-1}{\gamma} \rfloor-1\right)^{\alpha}\\
			&~~~~+(n-\gamma)+(\gamma-1)(2^{\alpha}-1)-2(2^{\alpha}-1)(\gamma-1)-(n-\gamma).
		\end{split}
	\end{equation*}

Note that $0<\left( \lfloor \frac{n-1}{\gamma} \rfloor+1\right)^{\alpha}-\left( \lfloor \frac{n-1}{\gamma} \rfloor\right)^{\alpha} <\left( \lfloor \frac{n-1}{\gamma} \rfloor\right)^{\alpha}-\left( \lfloor \frac{n-1}{\gamma} \rfloor-1\right)^{\alpha} $ for $\alpha\in(0,1)$. Then we have
\begin{align*}
		h(0)-h(1-\gamma)&=\left(n-\gamma \lfloor \frac{n-1}{\gamma} \rfloor-1 \right)\left[\left( \lfloor \frac{n-1}{\gamma} \rfloor+1\right)^{\alpha}+\left( \lfloor \frac{n-1}{\gamma} \rfloor-1\right)^{\alpha}-2\left( \lfloor \frac{n-1}{\gamma} \rfloor\right)^{\alpha} \right]\\
		&~~~~+(\gamma-1)\left[\left( \lfloor \frac{n-1}{\gamma} \rfloor\right)^{\alpha}-\left( \lfloor \frac{n-1}{\gamma} \rfloor-1\right)^{\alpha}+2^{\alpha}-1   \right]<0.
\end{align*}

The inequality is strict. Thus,
\begin{equation*}
	^0R_{\alpha}\leq\left[\left(\lfloor\frac{n-1}{\gamma}\rfloor\right)^{\alpha}-\left(\lfloor\frac{n-1}{\gamma}\rfloor-1\right)^{\alpha}  \right]\left(n-\gamma\lfloor\frac{n-1}{\gamma}\rfloor \right)+\gamma\left(\lfloor\frac{n-1}{\gamma}\rfloor-1\right)^{\alpha}+2(2^{\alpha}-1)(\gamma-1)+(n-\gamma),
\end{equation*}
for $\alpha\in(0,1)$ and $1\leq\gamma\leq\frac{n}{3}$. Equality holds if and only if $l_2-l_3=1-\gamma$. Combining \eqref{sys1} and \eqref{sum1}, yields $l_1=n-\gamma$, $l_2=0$ and $l_3=\gamma-1$. One can easily check that the corresponding extremal trees in such a case all belong to $\mathcal{F}_{1}(n,\gamma)$.

\begin{figure}[htbp]
	\centering 
	\includegraphics[height=4cm, width=12cm]{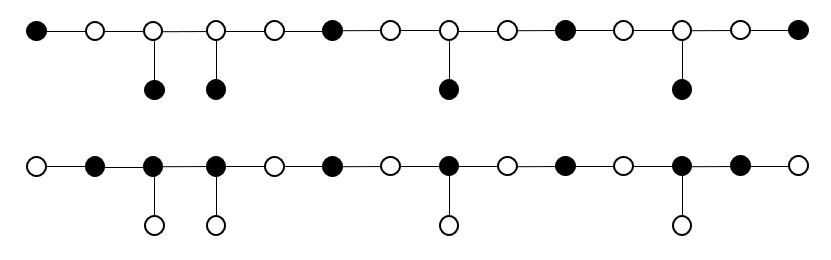}
	\caption{Tow non-isomorphic trees in the graph family $\mathcal{F}_2(n,\gamma)$.}
	\label{figure2}
\end{figure}

(ii). For $\gamma=\lceil\frac{n}{3}\rceil$, the path $P_n$ is the unique tree such that $^0R_{\alpha}$ with $\alpha\in(0,1)$ attains maximum. Then we suppose $\gamma\geq\frac{n+3}{3}$. Now, it holds $2\gamma\leq n\leq 3\gamma-3$, implying $n\geq6$ and $\gamma\geq3$. Due to
\begin{equation*}
	1=\frac{n-\gamma}{n-\gamma}\leq\frac{n-1+l_3-l_2}{n-\gamma}\leq\frac{n-1+\gamma-1}{n-\gamma}=1+2\frac{\gamma-1}{n-\gamma}<3,
\end{equation*}
which implies that $q'=\lfloor\frac{n-1+l_3-l_2}{n-\gamma}\rfloor=2$ or $q'=\lfloor\frac{n-1+l_3-l_2}{n-\gamma}\rfloor=1$. Therefore, we consider the following two cases.

\textbf{Case 1.} $q'=\lfloor\frac{n-1+l_3-l_2}{n-\gamma}\rfloor=1$.
In this case, one can see $l_3-l_2<n-2\gamma+1$, implying $l_2-l_3\geq 2\gamma-n$. Note that $\gamma\leq\frac{n}{2}$, thus, $2\gamma-n\leq0$. For convince, we will divide the discussion into $2\gamma-n\leq-1$ and $2\gamma-n=0$.

\textbf{Case 1.1.} $2\gamma-n\leq-1$.
Obviously, $2\leq\frac{n-1}{\gamma}\leq\frac{n-1}{n/3+1}<3$, then we have $\lfloor\frac{n-1}{\gamma}\rfloor=2$. Assume that $2\gamma-n\leq l_2-l_3\leq0$. Similarly, one can see
\begin{align}
	q=\lfloor \frac{n-1+l_2-l_3}{\gamma} \rfloor &= \lfloor \frac{n-1}{\gamma} \rfloor=2,~~~~~~~\mbox{for }2\gamma-n+1\leq l_2-l_3\leq0,\label{theinequ5}\\
	q=\lfloor \frac{n-1+l_2-l_3}{\gamma} \rfloor &= \lfloor \frac{n-1}{\gamma} \rfloor-1=1,~~\mbox{for }l_2-l_3=2\gamma-n.\label{theinequ6}
\end{align}

Due to $q'=\lfloor\frac{n-1+l_3-l_2}{n-\gamma}\rfloor=1$ and $\gamma\geq\frac{n+3}{3}$, then the only relation \eqref{theinequ5} holds. Hence,
\begin{equation}\label{hfuc1}
	\begin{split}
		h(l_2-l_3)&=(3^{\alpha}-2\cdot2^{\alpha}+1)(l_2-l_3)+(3^{\alpha}-2^{\alpha}+1)n\\
		&~~~~+(4\cdot2^{\alpha}-2\cdot3^{\alpha}-2)\gamma-(3^{\alpha}-1),  (2\gamma-n+1\leq l_2-l_3\leq 0).
	\end{split}
\end{equation}

Now, we suppose $0\leq l_2-l_3\leq\gamma-1$. Analogously, one can see
\begin{align*}
	q=\lfloor \frac{n-1+l_2-l_3}{\gamma} \rfloor &= \lfloor \frac{n-1}{\gamma} \rfloor=2,~~~~~~~\mbox{for  }0\leq l_2-l_3\leq 3\gamma-n,\\
	q=\lfloor \frac{n-1+l_2-l_3}{\gamma} \rfloor &= \lfloor \frac{n-1}{\gamma} \rfloor+1=3,~~\mbox{for  }3\gamma-n+1\leq l_2-l_3\leq \gamma-1,
\end{align*}
implying
\begin{equation}\label{hfuc2}
	\begin{split}
		h(l_2-l_3)&=(3^{\alpha}-2\cdot2^{\alpha}+1)(l_2-l_3)+(3^{\alpha}-2^{\alpha}+1)n\\
		&~~~~+(4\cdot2^{\alpha}-2\cdot3^{\alpha}-2)\gamma-(3^{\alpha}-1),~~(0\leq l_2-l_3\leq 3\gamma-n)
	\end{split}
\end{equation}
and
\begin{equation*}
	\begin{split}	h(l_2-l_3)&=(4^{\alpha}-3^{\alpha}-2^{\alpha}+1)(l_2-l_3)+(4\cdot3^{\alpha}-3\cdot4^{\alpha}+2^{\alpha}-2)\gamma\\
	&~~~~+(4^{\alpha}-3^{\alpha}+1)n-(4^{\alpha}-3^{\alpha}+2^{\alpha}-1),~~(3\gamma-n+1\leq l_2-l_3\leq\gamma-1)
	\end{split}
\end{equation*}

Then by \eqref{hfuc1} and \eqref{hfuc2}, we obtain
\begin{equation}\label{hfuc3}
	\begin{split}
		h(l_2-l_3)&=(3^{\alpha}-2\cdot2^{\alpha}+1)(l_2-l_3)+(3^{\alpha}-2^{\alpha}+1)n\\
		&~~~~+(4\cdot2^{\alpha}-2\cdot3^{\alpha}-2)\gamma-(3^{\alpha}-1),  (2\gamma-n+1\leq l_2-l_3\leq 3\gamma-n).
	\end{split}
\end{equation}

See $h(3\gamma-n+1)-h(3\gamma-n)=(3^{\alpha}-2\cdot2^{\alpha}+1)<0$ for any $\alpha\in(0,1)$, we just need consider the relation \eqref{hfuc3}. 

Note that $^0R_{\alpha\in(0,1)}(T)$ attains its maximum if and only if $T$ is a path (See \cite{Li} Theorem 4.2), we conclude that an extremal $T$, whose $^0R_{\alpha\in(0,1)}$ is maximum, only consist of vertices with degree $1$, $2$ and $3$. To determine a sharp upper bound on $^0R_{\alpha}$, we must investigate further to find a feasible value of $l_2-l_3$. For a minimum dominating set $D$ of tree $T$,  the number of vertices with degree $2$ and $3$ are denoted by $s_2$ and $s_3$, respectively. Also, for the set $\overline{D}$, the number of vertices with degree $1$ and $2$are denoted by $s'_{1}$ and $s'_{2}$, respectively. It's holds
\begin{equation}\label{system1}
	\begin{cases}
		|V_T|=s_2+s_3+s'_1+s'_2,\\
		s_2+s_3=\gamma,\\
		s'_1+s'_2=n-\gamma.
	\end{cases}
\end{equation}

Combining $\sum_{v\in V_T}d_v=2(n-1)=2(s_2+s_3+s'_1+s'_2-1)=s'_1+2(s_2+s'_2)+3s_3$, yields $s_3=s'_1-2$ and $s_2-s'_2=2\gamma-n+2$. From \eqref{system1}, we get
\begin{equation}\label{system2}
	\begin{cases}
		n-1+l_2-l_3=2s_2+3s'_1-6,\\
		n-1+l_3-l_2=s'_1+2s'_2.
	\end{cases}
\end{equation}

Based on \eqref{ineq1} and system \eqref{system2}, the function $h(l_2-l_3)$ becomes
\begin{equation*}
	h(s'_1)=(3^{\alpha}-2\cdot2^{\alpha}+1)s'_1+2^{\alpha}\cdot n+2(2^{\alpha}-3^{\alpha}), ~~\mbox{for }2\leq s'_1\leq\gamma+1.
\end{equation*}

\textbf{Case 1.2.} $2\gamma-n=0$, i.e., $\gamma=\frac{n}{2}$ if $n$ is even.

Then, $1\leq\frac{n-1}{\gamma}=1+\frac{\gamma-1}{\gamma}<2$, which implies $\lfloor\frac{n-1}{\gamma}\rfloor=1$. One can easily check that the following relations hold.
\begin{align*}
	q=\lfloor \frac{n-1+l_2-l_3}{\gamma} \rfloor &= \lfloor \frac{n-1}{\gamma} \rfloor=1,~~~~~~~\mbox{for  } l_2-l_3=0,\\
	q=\lfloor \frac{n-1+l_2-l_3}{\gamma} \rfloor &= \lfloor \frac{n-1}{\gamma} \rfloor+1=2,~~\mbox{for  }1\leq l_2-l_3\leq\frac{n-2}{2}.
\end{align*}

By the analogous derivation, the function $h(l_2-l_3)$ can be given by
\begin{equation*}
	h(s'_1)=(3^{\alpha}-2\cdot2^{\alpha}+1)s'_1+2^{\alpha}\cdot n+2(2^{\alpha}-3^{\alpha}), ~~\mbox{for }2\leq s'_1\leq\frac{n}{2}.
\end{equation*}

In \cite{Boro2}, Borovi\'canin and Furtula have proved that $s'_1\geq 3\gamma-n$ for trees, and $s'_1>3\gamma-n$ always holds if there exists a vertex in $V_T$ has two pendent neighbours. It's obvious that $^0R_{\alpha}$ of trees attains its maximum value if $s'_1=3\gamma-n$, i.e., $l_2-l_3=5\gamma-2n+1$. In such a case, one can check that corresponding extremal trees all belong to $\mathcal{F}_2(n,\gamma)$. Now, the function $h(l_2-l_3)$ becomes
\begin{align*}
	h(3\gamma-n)&=(3^{\alpha}-2\cdot2^{\alpha}+1)(3\gamma-n)+2^{\alpha}\cdot n+2(2^{\alpha}-3^{\alpha})\\
	&=(-3^{\alpha}+3\cdot 2^{\alpha}-1) n+3(3^{\alpha}-2\cdot2^{\alpha}+1)\gamma+2(2^{\alpha}-3^{\alpha}).
\end{align*}

\textbf{Case 2.} $q'=\lfloor\frac{n-1+l_3-l_2}{n-\gamma}\rfloor=2$.
Due to $2\leq\frac{n-1+l_3-l_2}{n-\gamma}<3$, we have $l_2-l_3\leq2\gamma-n+1<0$. It holds
\begin{equation*}
	1\leq\frac{n-\gamma}{\gamma}\leq\frac{n-1+l_2-l_3}{\gamma}\leq\frac{2(\gamma-1)}{\gamma}<2,
\end{equation*}
implying $q=\lfloor \frac{n-1+l_2-l_3}{\gamma} \rfloor=1$. If $l_3-l_2=n-2\gamma+1$, we have $\frac{n-1+l_3-l_2}{n-\gamma}=2$, implying all vertices in $\overline{D}$ has degrees 2, where $D$ is an arbitrary minimum dominating set. Consequently, all vertices in $D$ have degree 1 and 2, i.e., $T\cong P_n$, a contradiction, since $\gamma\geq\frac{n+3}{3}$. 

Next, we assume $l_3-l_2\geq n-2\gamma+2$, then by \eqref{ineq1}, we get
\begin{equation*}
	\begin{split}
		h(l_2-l_3)&=(-3^{\alpha}+2\cdot2^{\alpha}-1)(l_2-l_3)+(3\cdot2^{\alpha}-3^{\alpha}-1)n\\
		&~~~~+(2\cdot3^{\alpha}-4\cdot2^{\alpha}+2)\gamma+(1-3^{\alpha}),~~(1-\gamma\leq l_2-l_3\leq 2\gamma-n-2).
	\end{split}
\end{equation*}

Analogously, we have to find the minimum realizable value of $l_2-l_3$. For an arbitrary minimum dominating set $D$ of $T$, the number of vertices with degree 1, and 2 are denoted by $s_1$ and $s_2$, respectively, and for the set $\overline{D}$, the number of vertices with degree 2, and 3 are denoted by $s'_2$ and $s'_3$, respectively. Apparently, it holds $s_2-s'_2=2\gamma-n-2$ and $l_2-l_3=2\gamma-n-s_1+1$. Hence, the function $h(l_2-l_3)$ can be given by
\begin{equation*}
	h(s_1)=(3^{\alpha}-2\cdot2^{\alpha}+1)s_1+2^{\alpha}n+2(2^{\alpha}-3^{\alpha}), \mbox{ for }3\leq s_1\leq 3\gamma-n.
\end{equation*}

Based on previous discussions, we can determined the only possible value of $s_1$, that is $3\gamma-n$, implying $l_2-l_3=1-\gamma$, such that there exists a corresponding extremal trees with order $n$ and domination number $\gamma$, where $\frac{n+3}{3}\leq\gamma\leq\frac{n}{2}$, satisfying its all vertices in an arbitrary minimum dominating set $D$ have degrees 1 and 2, while all vertices in $\overline{D}$ have degrees 2 and 3. Then, the function $h(l_2-l_3)$ can be written as
\begin{align*}
	h(3\gamma-n)&=(3^{\alpha}-2\cdot2^{\alpha}+1)(3\gamma-n)+2^{\alpha}\cdot n+2(2^{\alpha}-3^{\alpha})\\
	&=(-3^{\alpha}+3\cdot 2^{\alpha}-1) n+3(3^{\alpha}-2\cdot2^{\alpha}+1)\gamma+2(2^{\alpha}-3^{\alpha}).
\end{align*}

At this time, we have $l_1=n-\gamma$, $l_2=0$, and $l_3=\gamma-1$. Based on previous considerations, one can check that the corresponding extremal trees all belong to $\mathcal{F}_2(n,\gamma)$.

This complete the proof.
\end{proof}

\begin{figure}[htbp]
	\centering 
	\includegraphics[height=3.2cm, width=8.4cm]{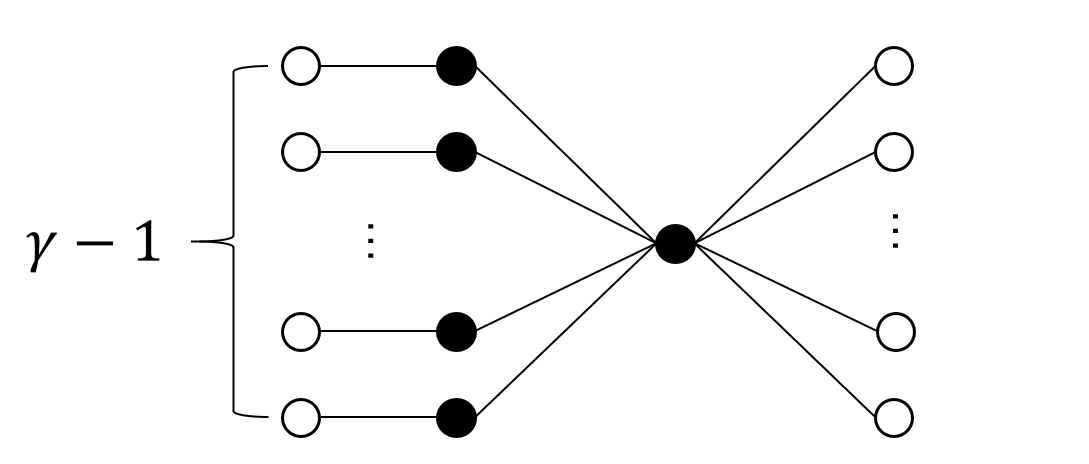}
	\caption{The tree family $\mathcal{F}_3(n,\gamma)$.}
	\label{figure3}
\end{figure}
\begin{theorem}
	Let $T$ be an $n$-vertex tree with domination number $\gamma$, $\alpha\in(0,1)$, then
	\begin{equation}\label{inequthe}
		^0R_{\alpha}\geq(n-\gamma)^{\alpha}+(n-\gamma)+(\gamma-1)\cdot2^{\alpha},
	\end{equation}
	with equality holding if and only if $T\in\mathcal{F}_{3}(n,\gamma)$.
\end{theorem}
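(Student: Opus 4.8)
The plan is to estimate $^0R_\alpha$ from below by separating the pendent vertices of $T$ from the non-pendent ones and exploiting that $t\mapsto t^\alpha$ is strictly concave on $[1,\infty)$ for $\alpha\in(0,1)$; Lemma~\ref{lem1}(i) is exactly the discrete form of this concavity that is needed.

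\textbf{The inequality.} Write $\ell$ for the number of pendent vertices of $T$ and assume $n\ge3$ (the case $n\le2$ being immediate). Every leaf has a non-leaf neighbour, so the set of non-pendent vertices is a dominating set of $T$; hence $\gamma\le n-\ell$, i.e. $\ell\le n-\gamma$. The degrees of the $n-\ell$ non-pendent vertices are positive integers, each at least $2$, whose sum is $\sum_{v\in V_T}d_v-\ell=2(n-1)-\ell$. I would apply Lemma~\ref{lem1}(i) repeatedly to this multiset -- each move transfers a unit from a smaller entry to a larger one and, by part (i), strictly decreases the sum of $\alpha$th powers -- reducing every entry but one down to $2$; thus the minimum of $\sum_{v:\,d_v\ge2}(d_v)^\alpha$ is attained, uniquely, by the multiset with one entry equal to $\bigl(2(n-1)-\ell\bigr)-2(n-\ell-1)=\ell$ and $n-\ell-1$ entries equal to $2$. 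Consequently
\[
   {}^0R_\alpha(T)=\ell+\sum_{v:\,d_v\ge2}(d_v)^\alpha\ \ge\ \ell+\ell^\alpha+(n-\ell-1)\,2^\alpha=:g(\ell).
\]
Next I would show $g$ is strictly decreasing on $\{2,\dots,n-1\}$: since $(\ell+1)^\alpha-\ell^\alpha$ is non-increasing in $\ell$,
\[
   g(\ell+1)-g(\ell)=1+(\ell+1)^\alpha-\ell^\alpha-2^\alpha\ \le\ 1+3^\alpha-2\cdot2^\alpha\ <\ 0,
\]
the last step being the strict concavity estimate $1^\alpha+3^\alpha<2\cdot2^\alpha$. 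Combining this with $2\le\ell\le n-\gamma$ (a tree on $n\ge3$ vertices has at least two leaves) gives
\[
   {}^0R_\alpha(T)\ \ge\ g(\ell)\ \ge\ g(n-\gamma)=(n-\gamma)^\alpha+(n-\gamma)+(\gamma-1)\,2^\alpha,
\]
which is \eqref{inequthe}.

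\textbf{The extremal trees.} Equality above forces $g(\ell)=g(n-\gamma)$, hence $\ell=n-\gamma$ because $g$ is strictly decreasing, and it forces the non-leaf degree multiset to be $\{\,n-\gamma,\underbrace{2,\dots,2}_{\gamma-1}\,\}$. So $T$ has one vertex $u$ of degree $n-\gamma$, exactly $\gamma-1$ vertices of degree $2$, and $n-\gamma$ leaves; when $n-\gamma\ge3$ this says precisely that $T$ is a spider with centre $u$ and $n-\gamma$ legs, of lengths $\ell_1,\dots,\ell_{n-\gamma}$ with $\sum_i(\ell_i-1)=\gamma-1$. Using $\gamma(P_m)=\lceil m/3\rceil$, the vertex $u$ together with a minimum dominating set of the tail $P_{\ell_i-1}$ of each leg forms a dominating set of $T$ of size $1+\sum_i\lceil(\ell_i-1)/3\rceil\le1+\sum_i(\ell_i-1)=\gamma$, with equality if and only if every $\ell_i\le2$. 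Thus a leg of length $\ge3$ would force $\gamma(T)<\gamma$, a contradiction; so all legs have length $1$ or $2$, which is exactly the description of $\mathcal{F}_{3}(n,\gamma)$. Conversely, a one-line computation shows every $T\in\mathcal{F}_{3}(n,\gamma)$ attains the bound.

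\textbf{Where the effort goes.} The core estimate is routine once the degree sequence is put in the ``leaves plus one heavy vertex'' normal form, which is just Lemma~\ref{lem1}. The one point that needs genuine care is the equality analysis -- recognising that the extremal degree sequence forces a spider and then pinning down exactly which spiders keep the domination number equal to $\gamma$ -- together with a direct check of the few small cases (notably $n-\gamma\le2$, where $T$ degenerates to a path) that fall outside the spider picture.
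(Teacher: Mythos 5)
Your proof is correct, but it follows a genuinely different route from the paper. The paper proves this bound by induction on $n$: it deletes a pendant vertex $v_1$ of a diametral path, splits into the two cases $\gamma(T-v_1)=\gamma(T)$ and $\gamma(T-v_1)=\gamma(T)-1$, and in the first case closes the estimate using concavity together with $d_{v_2}\le\Delta(T)\le n-\gamma$. You instead argue non-inductively: the only place the domination number enters is the observation that the non-pendent vertices dominate $T$, so the number of leaves satisfies $\ell\le n-\gamma$; then Lemma \ref{lem1}(i), read in the ``unbalancing'' direction, pins down the minimizing degree multiset for the non-leaves as one entry $\ell$ plus all $2$'s, and the strict monotonicity of $g(\ell)=\ell+\ell^{\alpha}+(n-\ell-1)2^{\alpha}$ (via $1+3^{\alpha}<2\cdot 2^{\alpha}$) pushes $\ell$ up to $n-\gamma$. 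Each approach has its advantages: the paper's induction transfers verbatim to Theorem \ref{them42} by reversing inequalities and keeps the equality analysis inside the inductive hypothesis, while your argument isolates exactly where $\gamma$ is used and yields a more explicit equality characterization --- the extremal degree sequence forces a spider, and the estimate $\gamma(T)\le 1+\sum_i\lceil(\ell_i-1)/3\rceil$ rules out legs of length at least $3$, recovering $\mathcal{F}_3(n,\gamma)$ directly. Your treatment of the degenerate cases ($n\le 2$ and $n-\gamma\le 2$, where the extremal tree is a short path rather than a genuine spider) is the one place that must be, and is, handled separately; with that noted, the argument is complete and, like the paper's, extends to $\alpha\in(-\infty,0)\cup(1,\infty)$ by flipping every inequality.
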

\begin{proof}
	Assume first that $\Delta(T)=2$, implying $T$ is a path and $\gamma(T)=\lceil\frac{n}{3}\rceil$. It holds $T_2\in\mathcal{F}_3(2,1)$ ($\cong P_2$), $T_3\in\mathcal{F}_3(3,1)$ ($\cong P_3$) and $T_4\in\mathcal{F}_3(4,2)$ ($\cong P_4$). If $n\geq5$, the inequality in \eqref{inequthe} is strict.
	
	For $\Delta(T)\geq3$, we take an arbitrary diameter in $T$, denoted by $v_1v_2\dots v_d$ ($d\geq4$). From the definition of domination number, it holds $\Delta(T)\leq n-\gamma$. Then, we prove this theorem by induction on $n$. Suppose the inequality in \eqref{inequthe} holds for $|V_T|=n-1$.  If $|V_T|=n$, we let $T_{-1}=T-\left\lbrace v_1 \right\rbrace $ and consider the following two cases.
	
	\textbf{Case 1.} $\gamma(T_{-1})=\gamma(T)$.
	By calculation, we get
	\begin{align}
		^0R_{\alpha}(T)&={^0}R_{\alpha}(T_{-1})-\left( d_{v_2}-1\right) ^{\alpha}+\left( d_{v_2} \right)^{\alpha}+1\nonumber\\
		&\geq (n-\gamma-1)+(n-\gamma-1)^{\alpha}+(\gamma-1)\cdot2^{\alpha}-\left( d_{v_2}-1\right) ^{\alpha}+\left( d_{v_2} \right)^{\alpha}+1\label{inequ316}\\
		&=(n-\gamma)+(n-\gamma)^{\gamma}+(\gamma-1)\cdot2^{\alpha}+\left[ (n-\gamma-1)^{\alpha}-(n-\gamma)^{\alpha}\right]-\left[(d_{v_2}-1)^{\alpha}-(d_{v_2})^{\alpha}\right]\nonumber\\
		&\geq (n-\gamma)^{\alpha}+(n-\gamma)+(\gamma-1)\cdot2^{\alpha},~~\alpha\in(0,1).\label{inequ317}
	\end{align}

	Equalities in both \eqref{inequ316} and \eqref{inequ317} hold if and only if $d_{v_2}=\Delta(T)=n-\gamma$, implying $T\in\mathcal{F}_{3}(n,\gamma)$.
	
	\textbf{Case 2.} $\gamma(T_{-1})=\gamma(T)-1$.
	Combining the definition of domination set, yields $d_{v_2}=2$. Hence,
	\begin{align}
		^0R_{\alpha}(T)&={^0}R_{\alpha}(T_{-1})-\left( d_{v_2}-1\right) ^{\alpha}+\left( d_{v_2} \right)^{\alpha}+1\nonumber\\
		&\geq (n-\gamma)^{\alpha}+(n-\gamma)+(\gamma-1)\cdot2^{\alpha}+\left[(d_{v_2})^{\alpha}-(d_{v_2}-1)^{\alpha}+1-2^{\alpha} \right]\label{inequ318}\\
		&=(n-\gamma)^{\alpha}+(n-\gamma)+(\gamma-1)\cdot2^{\alpha},~~\alpha\in(0,1).\nonumber
	\end{align}

	Equality in \eqref{inequ318} holds if and only if $T_{-1}\in\mathcal{F}_{3}(n-1,\gamma-1)$, i.e., $T\in\mathcal{F}_{3}(n,\gamma)$.
	
	This completes the proof.
\end{proof}

\section{Bounds for the $^0R_{\alpha\in(-\infty,0)\cup(1,\infty)}$ of trees in terms of domination number}
Next, we will show two theorems to clarify the sharp upper and lower bounds on $^0R_{\alpha\in(-\infty,0)\cup(1,\infty)}$. Based on the Corollary \eqref{coro1} and proofs in Section 3, we can derive the following two theorems by a straightforward procedure.

\begin{theorem}\label{them41}
	Let $T$ be an $n$-vertex tree with domination number $\gamma$, $\alpha\in(-\infty,0)\cup(1,\infty)$, then
	\begin{itemize}
		\item[\textnormal{(\romannumeral1)}]
		\begin{equation*}
			^0R_{\alpha}\geq\left[\left(\lfloor\frac{n-1}{\gamma}\rfloor\right)^{\alpha}-\left(\lfloor\frac{n-1}{\gamma}\rfloor-1\right)^{\alpha}  \right]\left(n-\gamma\lfloor\frac{n-1}{\gamma}\rfloor \right)+\gamma\left(\lfloor\frac{n-1}{\gamma}\rfloor-1\right)^{\alpha}+2(2^{\alpha}-1)(\gamma-1)+(n-\gamma),	
		\end{equation*}  
		for $1\leq\gamma\leq\frac{n}{3}$, with equality holding if and only if $T\in\mathcal{F}_{1}(n,\gamma)$.
		\item[\textnormal{(\romannumeral2)}] \begin{equation*}
			^0R_{\alpha}\geq\begin{cases}
				(n-2)\cdot2^{\alpha}+2, & \mbox{for }\gamma = \lceil\frac{n}{3}\rceil, \\
				(-3^{\alpha}+3\cdot 2^{\alpha}-1) n+3(3^{\alpha}-2\cdot2^{\alpha}+1)\gamma+2(2^{\alpha}-3^{\alpha}), & \mbox{for } \frac{n+3}{3}\leq\gamma\leq\frac{n}{2},
			\end{cases}
		\end{equation*} 
		with equality holding if and only if $T\in\mathcal{F}_{2}(n,\gamma)$.
	\end{itemize}
\end{theorem}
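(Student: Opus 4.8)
The plan is to mimic the structure of the proof of the earlier theorem (Case~(i) for $1\le\gamma\le n/3$ and Case~(ii) for $\frac{n+3}{3}\le\gamma\le\frac{n}{2}$), exploiting the fact that Corollary~\ref{coro1}(ii) gives exactly the reversed monotonicity of Corollary~\ref{coro1}(i): for $\alpha\in(-\infty,0)\cup(1,\infty)$ the sum \eqref{sum1} is \emph{minimized}, rather than maximized, when the degrees within $D$ and within $\overline D$ are as equal as possible. Consequently every inequality in the Section~2 derivation flips, yielding $^0R_\alpha\ge h(l_2-l_3)$, and the same function $h$ reappears. So the task reduces to minimizing $h(l_2-l_3)$ over the admissible range $|l_2-l_3|\le\gamma-1$ from \eqref{eqbasis}, instead of maximizing it.

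For part~(i), I would first reproduce the observations $q'=\lfloor\frac{n-1+l_3-l_2}{n-\gamma}\rfloor=1$ and $q=\lfloor\frac{n-1+l_2-l_3}{\gamma}\rfloor\ge2$ (these depend only on $n,\gamma$ and the bound \eqref{eqbasis}, not on the sign of $\alpha$), so the piecewise-linear-in-$(l_2-l_3)$ formula for $h$ is unchanged. The key sign reversals are: the slope coefficients $(q+1)^\alpha-q^\alpha+1-2^\alpha$ and $(\lfloor\frac{n-1}{\gamma}\rfloor)^\alpha-(\lfloor\frac{n-1}{\gamma}\rfloor-1)^\alpha+1-2^\alpha$, which were negative for $\alpha\in(0,1)$ (forcing $h$ to decrease), are now \emph{positive} for $\alpha\in(-\infty,0)\cup(1,\infty)$, because the function $x\mapsto x^\alpha$ is strictly convex there — precisely the content of Lemma~\ref{lem1}(ii) / Corollary~\ref{coro1}(ii). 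Hence on each linear piece $h$ increases, so its minimum over $\{0\le l_2-l_3\le\gamma-1\}$ and over $\{1-\gamma\le l_2-l_3\le0\}$ is attained at the left endpoint of the leftmost piece, i.e.\ at $l_2-l_3=1-\gamma$. A short computation of $h(0)-h(1-\gamma)$ — identical to the displayed one in the earlier proof but with the convexity inequality $(\lfloor\tfrac{n-1}{\gamma}\rfloor+1)^\alpha+(\lfloor\tfrac{n-1}{\gamma}\rfloor-1)^\alpha-2(\lfloor\tfrac{n-1}{\gamma}\rfloor)^\alpha>0$ — then shows $h(0)-h(1-\gamma)>0$, so the global minimum is $h(1-\gamma)$, which is exactly the claimed right-hand side. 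Tracing back through \eqref{sys1} gives $l_1=n-\gamma$, $l_2=0$, $l_3=\gamma-1$, so equality holds iff $T\in\mathcal{F}_1(n,\gamma)$.

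For part~(ii), the same bookkeeping from the earlier proof transfers. For $\gamma=\lceil n/3\rceil$ the unique tree is $P_n$, giving $(n-2)2^\alpha+2$ directly. For $\frac{n+3}{3}\le\gamma\le\frac n2$ one again splits on $q'\in\{1,2\}$; the piecewise descriptions of $h$ are unchanged, and the one-step differences such as $h(3\gamma-n+1)-h(3\gamma-n)=3^\alpha-2\cdot2^\alpha+1$, which was negative before, is now positive by strict convexity, so $h$ increases on the relevant piece and we are pushed to the \emph{other} endpoint $s_1'=3\gamma-n$ (resp.\ $s_1=3\gamma-n$). The Borov\'icanin–Furtula bound $s_1'\ge3\gamma-n$ (and $s_1\ge 3\gamma-n$ in Case~2) — which is a purely structural fact about trees — still pins the optimizer to that boundary, yielding $l_2-l_3=1-\gamma$ and the value $(-3^\alpha+3\cdot2^\alpha-1)n+3(3^\alpha-2\cdot2^\alpha+1)\gamma+2(2^\alpha-3^\alpha)$, with equality iff $T\in\mathcal{F}_2(n,\gamma)$.

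The main obstacle is not conceptual — it is ensuring that \emph{every} sign that was used implicitly in the Section~3 proof is correctly reversed and that no inequality was relying on $0<\alpha<1$ in a way that does not simply flip. In particular I must double-check the chain of convexity estimates (e.g.\ $0<(m+1)^\alpha-m^\alpha<m^\alpha-(m-1)^\alpha$ for $0<\alpha<1$ becomes $(m+1)^\alpha-m^\alpha>m^\alpha-(m-1)^\alpha>0$ for $\alpha>1$, while for $\alpha<0$ both increments are negative and the ordering again reverses), and verify that the degenerate sub-cases — $n=3\gamma$ in part~(i), $\gamma=n/2$ with $n$ even in part~(ii), and the ``$T\cong P_n$ contradiction'' step — still behave correctly. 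Since the extremal configurations $\mathcal{F}_1,\mathcal{F}_2$ are the same and the function $h$ is the same, once the monotonicity is flipped the extremal trees are forced to the opposite endpoints, which (reassuringly) turn out to be the \emph{same} endpoints $l_2-l_3=1-\gamma$ as before because in the $\alpha\in(0,1)$ case the descent also ended there; so the statement is consistent, and the proof is ``a straightforward procedure'' as claimed, modulo this careful sign audit.
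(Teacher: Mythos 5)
Your proposal is correct and follows exactly the route the paper intends: the paper itself offers no detailed argument for Theorem~\ref{them41}, stating only that it follows from Corollary~\ref{coro1}(ii) and the Section~3 proofs ``by a straightforward procedure,'' and your sign-reversal audit — the slopes $(q+1)^\alpha-q^\alpha+1-2^\alpha$ turning positive by strict convexity of $x^\alpha$, the same candidate endpoints surviving, the structural bound $s_1'\ge 3\gamma-n$ still pinning the optimizer, hence the minimum of $h$ landing at the same point $l_2-l_3=1-\gamma$ — is precisely that procedure carried out correctly. In fact your write-up supplies more of the verification (e.g.\ the reversed increment ordering for $\alpha<0$ versus $\alpha>1$, and the check of the degenerate sub-cases) than the paper does.
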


\begin{theorem}\label{them42}
	Let $T$ be an $n$-vertex tree with domination number $\gamma$, $\alpha\in(-\infty,0)\cup(1,\infty)$, then
	\begin{equation*}
		^0R_{\alpha}\leq(n-\gamma)^{\alpha}+(n-\gamma)+(\gamma-1)\cdot2^{\alpha},
	\end{equation*}
	with equality holding if and only if $T\in\mathcal{F}_{3}(n,\gamma)$.
\end{theorem}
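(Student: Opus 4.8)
The plan is to mirror the argument of the lower-bound theorem for $\alpha\in(0,1)$ (the theorem giving $^0R_\alpha\ge (n-\gamma)^\alpha+(n-\gamma)+(\gamma-1)2^\alpha$), since the only structural input there was the monotonicity/convexity facts from Lemma \ref{lem1} and Corollary \ref{coro1}, and those simply flip direction on $(-\infty,0)\cup(1,\infty)$. Concretely, I would first dispose of the degenerate case $\Delta(T)=2$: then $T$ is a path, $\gamma=\lceil n/3\rceil$, and a direct check shows $P_2,P_3,P_4$ lie in $\mathcal{F}_3$ and realize equality, while for $n\ge 5$ one verifies the inequality is strict (here one needs, for $\alpha\in(-\infty,0)\cup(1,\infty)$, that a path's $^0R_\alpha$ is strictly smaller than the claimed bound — a short monotonicity computation). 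For $\Delta(T)\ge 3$, pick a diametral path $v_1v_2\cdots v_d$ with $d\ge 4$, note $\Delta(T)\le n-\gamma$, and induct on $n$, setting $T_{-1}=T-\{v_1\}$.

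The two inductive cases are exactly as before. In Case 1, $\gamma(T_{-1})=\gamma(T)=\gamma$, and
$$
^0R_\alpha(T)={}^0R_\alpha(T_{-1})-(d_{v_2}-1)^\alpha+(d_{v_2})^\alpha+1\ge (n-\gamma-1)+(n-\gamma-1)^\alpha+(\gamma-1)2^\alpha-(d_{v_2}-1)^\alpha+(d_{v_2})^\alpha+1,
$$
and I rewrite the right-hand side as the target bound plus $\big[(n-\gamma-1)^\alpha-(n-\gamma)^\alpha\big]-\big[(d_{v_2}-1)^\alpha-(d_{v_2})^\alpha\big]$. The bracketed difference is $\ge 0$ on $(-\infty,0)\cup(1,\infty)$ because there $x\mapsto x^\alpha$ has $x^\alpha-(x-1)^\alpha$ \emph{increasing} in $x$ (the sign flip of Lemma \ref{lem1}'s key derivative estimate) and $2\le d_{v_2}\le\Delta(T)\le n-\gamma$; equality forces $d_{v_2}=n-\gamma$, which combined with the induction hypothesis's equality condition pins $T\in\mathcal{F}_3(n,\gamma)$. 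In Case 2, $\gamma(T_{-1})=\gamma-1$ forces $d_{v_2}=2$, so the correction term is $(2)^\alpha-(1)^\alpha+1-2^\alpha=0$, the induction hypothesis applied to $T_{-1}$ (order $n-1$, domination number $\gamma-1$) gives exactly the bound, and equality holds iff $T_{-1}\in\mathcal{F}_3(n-1,\gamma-1)$, i.e. $T\in\mathcal{F}_3(n,\gamma)$.

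The main obstacle is not the induction but keeping the base cases and degeneracies honest: I must confirm that every tree with $\Delta(T)\le 2$ and $n\le 4$ actually lies in $\mathcal{F}_3$, and — more delicately — that for $\alpha\in(-\infty,0)\cup(1,\infty)$ the direction of the two elementary inequalities $x^\alpha-(x-1)^\alpha$ increasing and $(d_{v_2})^\alpha-(d_{v_2}-1)^\alpha+1-2^\alpha\ge 0$ (this last one using $d_{v_2}\ge 2$) is genuinely reversed relative to the $(0,1)$ case, including the $\alpha<0$ subrange where powers are decreasing; both reduce to the Lagrange/second-derivative sign computation already carried out in Lemma \ref{lem1}, so no new idea is needed, only care with signs. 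One should also check the induction actually reaches the base: since $d\ge 4$ guarantees $v_2$ is not a leaf of $T_{-1}$, $T_{-1}$ is a genuine tree of order $n-1$ with a well-defined domination number in $\{\gamma-1,\gamma\}$, so the recursion is valid. Assembling these pieces completes the proof.
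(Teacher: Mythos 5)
Your proposal is correct and takes exactly the route the paper intends: the paper gives no written proof of Theorem \ref{them42} beyond asserting that it follows from the Section 3 induction ``by a straightforward procedure,'' and your sign analysis --- that $x^{\alpha}-(x-1)^{\alpha}$ is increasing in $x$ on both $\alpha<0$ and $\alpha>1$, that $d_{v_2}\le\Delta(T)\le n-\gamma$ closes Case 1, and that the Case 2 correction term $(d_{v_2})^{\alpha}-(d_{v_2}-1)^{\alpha}+1-2^{\alpha}$ vanishes when $d_{v_2}=2$ --- is precisely the required adaptation. In effect you have supplied the details the paper leaves implicit; I see no gap.
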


Combining the results in \cite{Boro2} and \cite{CLiu}, Theorem \ref{them41} and \ref{them42} can be directly verified.

\section*{Acknowledgements} The authors would like to express their sincere gratitude to all the referees for their careful reading and insightful suggestions.


\begin{thebibliography}{00}
	\bibitem{Boll} B. Bollob\'as, P. Erd\"os, Graphs of extremal weights, \textit{Ars Combin}. 50 (1998) 225-233.
	
	\bibitem{Boro2}B. Borov\'canin, B. Furtula, On extremal Zagreb indices of trees with given domination number, \textit{Appl. Math. Comput.} 279 (2016) 208-218.
	
	\bibitem{Berm} S. Bermudo, J. E. Nápoles, J. Rada, Extremal trees for the Randi\'c index with given domination number. \textit{Appl. Math. Comput}. 375 (2020) 125122.
	
	
	
	
	\bibitem{Del} C. Delorme, O. Favaron, D. Rautenbach,  On the Randi\'c index, \textit{Discrete Math.} 257 (2002) 29-38.
	
	\bibitem{Deh} M. Dehmer, F. Emmert-Streib, Y. Shi, Interrelations of graph distance measures based on topological indices, \textit{PloS one}. 9 (2014) e94985.
	
	\bibitem{Dvo} Z. Dvo\v{r}\'ak, B. Lidicky, R. \v{S}krekovski, Randi\'c index and the diameter of a graph, \textit{European J. Combin.} 32 (2011) 434–442.
	
%
	\bibitem{Hu} Y. Hu, X. Li, Y. Shi, T. Xu, Connected $(n,m)$-graphs with minimum and maximum zeroth-order general randi index, \textit{Discrete Appl. Math.} 155 (2007) 1044-1054.
	
	
	
	\bibitem{Kier2} L. B. Kier, L. H. Hall, Molecular Connectivity in Structure-Activity Analysis, Wiley, New York, 1986.
	
	\bibitem{Knor}M. Knor, B. Lu\v{z}ar, R. \v{S}krekovski, Sandwiching the (generalized) Randi\'c index, \textit{Discrete Appl. Math.} 181 (2015) 160–166.
	
	
	\bibitem{CLiu} C. Liu, J. Li, Y. Pan, On Extremal Modified Zagreb Indices of Trees, \textit{MATCH Commun. Math. Comput. Chem.} 85 (2021) 349-366.
	
	\bibitem{Liu} H. Liu, M. Lu, F. Tian, On the Randić index. \textit{J. Math Chem.} 38 (2005) 345-354.
	
	\bibitem{Lang} R. Lang, X. Li, S. Zhang, Inverse problem for Zagreb index of molecular graphs, \textit{Appl. Math. J. Chinese Univ. Ser. A.} 18 (2003) 487–493 (in
	Chinese).
	
	\bibitem{Li} X. Li, Y. Shi, A survey on the Randic index, \textit{MATCH Commun. Math. Comput. Chem}. 59 (2008) 127-156.
	
	\bibitem{Lix1} X. Li, J. Zheng, A unified approach to the extremal trees for different indices, \textit{MATCH Commun. Math. Comput. Chem.} 54 (2005) 195–208.
	
	\bibitem{Lix2} X. Li, H. Zhao, Trees with the first three smallest and largest generalized topological indices, \textit{MATCH Commun. Math. Comput. Chem.} 50 (2004) 57–62.
	
	
	\bibitem{Pav} L. Pavlovi\'c, Maximal value of the zeroth-order Randi\'c index, \textit{Discrete Appl. Math.} 127 (2003) 615–626.
	
	\bibitem{Pav2} L. Pavlovi\'c, M. Lazi\'c, T. Aleksi\'c, More on "Connected $(n,m)$-graphs with minimum and maximum zeroth-order general randi\'c index", \textit{Discrete Appl. Math.} 157 (2009) 2938-2944.
	
%
	
	
%
%
%
%
%
	
	\bibitem{rand} M. Randi\'c, On characterization of molecular branching, \textit{J. Amer. Chem. Soc.} 97 (1975) 6609-6615.
	
	\bibitem{rand2} M. Randi\'c, M. Novi\'c, D. Plav\v{s}i\'c, Solved and Unsolved Problems in Structural Chemistry, CRC Press, Boca Raton, 2016.
	
	\bibitem{Shi} Y. Shi, Note on two generalizations of the Randi\'c index, \textit{Appl. Math. Comput.} 265 (2015) 1019-1025.
	
%
%
%
%


	
	
	
	
	

	

	
	
	
	
	
	
	
	
	
	
	
	
	
	
	
	
	
	

	
	

\end{thebibliography}
\end{document}